\newtheorem{pro}{Proposition}
\newtheorem{lem}{Lemma}
\newtheorem{cor}{Corollary}
\newtheorem{rem}{Remark}
\title{First hitting time of the boundary of a wedge of angle $\pi/4$ by a radial Dunkl process}
\author[N. Demni]{Nizar Demni}
\address{IRMAR, Universit\'e de Rennes 1\\ Campus de
Beaulieu\\ 35042 Rennes cedex\\ France}
\email{nizar.demni@univ-rennes1.fr}
\date{\today}
\keywords{Radial Dunkl process associated with dihedral groups, First hitting time of the boundary of a dihedral wedge, Gegenbauer polynomials, Modified Bessel functions.}
\begin{document}
\begin{abstract}
In this paper, we derive an integral representation for the density of the reciprocal of the first hitting time of the boundary of a wedge of angle $\pi/4$ by a radial Dunkl process with equal multiplicity values. Not only this representation readily yields the non negativity of the density, but also provides an analogue of Dufresne's result on the distribution of the first hitting time of zero by a Bessel process and a generalization of the Vakeroudis-Yor's identity satisfied by the first exit time from a wedge by a planar Brownian motion. We also use a result due to Spitzer on the angular part of the planar Brownian motion to prove a representation of the tail distribution of its first exit time from a dihedral wedge through the square wave function. 
\end{abstract}
\maketitle
\section{Introduction}
The radial Dunkl process is a self-similar diffusion valued in the closure of the positive Weyl chamber of a reduced root system in a finite-dimensional euclidean space (see \cite{CDGRVY}, Ch.II and III for a good account). Its infinitesimal generator is the differential part of the Dunkl-Laplace operator subject to Neumann boundary conditions. Equivalently, it is the projection of the jump Dunkl process valued in the underlying euclidian space onto the positive Weyl chamber. The importance of the radial Dunkl process in probability theory stems mainly from its connections to random matrix theory (eigenvalues of matrix-valued stochastic processes, \cite{Kat-Tan}), to the Brownian motion conditioned to stay in the interior of a Weyl chamber (representation theory of complex semi simple Lie algebras, \cite{BBO}) and to the Brownian motion reflected at the boundary of a Weyl chamber (multidimensional Tanaka formula and queuing theory, \cite{DL}). Of particular interest is also the radial Dunkl process associated with dihedral systems which has been extensively studied in \cite{Demni0}. Indeed, its angular part was identified there with a time-changed real Jacobi process by an independent Bessel clock and this identification was the key ingredient in proving several probabilistic and analytic results such as the expressions of the semi-group density and of the generalized Bessel function, a skew-product decomposition and the tail distribution of the first hitting time of the boundary of a dihedral wedge. In particular, the expansion in the basis of Jacobi polynomials obtained for the tail distribution generalizes the known Fourier expansions for the tail distribution of the exit time from dihedral wedges by a planar Brownian motion (\cite{Bra}, \cite{Com}). However, this expansion is not quite satisfactory since for instance its non negativity is far from being obvious. It does not help neither in checking whether or not interesting identities satisfied by the planar Brownian motion or for the Bessel process, such as Proposition 1.1. in \cite{Vak-Yor} or Dufresne's result on the distribution of the first hitting time of zero by a Bessel process (see e.g. \cite{MY}), extend to radial Dunkl processes valued in dihedral wedges. 

In this paper, we answer these inquiries for the radial Dunkl process valued in the dihedral wedge of angle $\pi/4$ and for sake of simplicity, we only deal with equal multiplicity values. More precisely, we derive an integral representation for the density of the reciprocal of the first hitting time of the boundary of this wedge which is easily seen to be nonnegative. Moreover, the integrand simplifies when the common multiplicity value lies in an appropriate interval and involves the modified Bessel function rather than the confluent hypergeometric function. In this case, the density is even written as the product of a Gamma weight and of its convolution with a non negative function and this result may be seen as an analogue of Dufresne's one though more complicated. On the other hand, when the radial Dunkl process starts at a point lying on the bisector, then the integrand simplifies as well and allows for a generalization of an identity due to Vakeroudis and Yor for the first exit time from the $\pi/4$-wedge by a planar Brownian motion. We also give some interest in the exit time from dihedral wedges for the latter process and derive a representation of its tail distribution through the square wave function. To this end, we use the Fourier expansion of this function proved in \cite{Bra} and valid for arbitrary (non necessarily dihedral) wedges together with the explicit expression of the characteristic function of the argument of a planar planar motion (\cite{Spi}). Let us finally stress that the choice of this dihedral wedge is justified by the difficulty of the problems solved below since for instance, we do not dispose yet of similar analytic techniques to deal even with the dihedral wedges of angles $\pi/3$ and $\pi/6$. 

The paper is organized as follows. For sake of self-containedness, we give in the next section a quick reminder on root systems and radial Dunkl processes associated with dihedral groups. We also list in the same section the various special functions occurring in the paper as well as their properties we shall need in our computations. The third section contains the statements and the proofs of the aforementioned integral representation, its simplified form when the multiplicity value lies in an appropriate interval and the generalization of Vakeroudis-Yor identity. In the last section, we show how the Fourier expansion of the tail distribution of the first exit time from dihedral wedges by a planar Brownian motion derived in \cite{Demni0} matches the one obtained in \cite{Bra} and prove afterwards its representation through the square wave function.

\section{Reminder}
\subsection{Root systems and reflection groups}
For facts on root systems, we refer the reader to the monograph \cite{Hum}. Let $(V,\langle \cdot, \cdot \rangle)$ be a finite-dimensional Euclidean space. Then, a root system $R$ in $V$ is a finite set of vectors (called roots) in $V \setminus \{0\}$ such that 
\begin{equation*}
\forall \alpha \in R, \quad \sigma_{\alpha}(R) = R, 
\end{equation*}
where $\sigma_{\alpha}$ is the reflection with respect to $\alpha^{\perp}$: 
\begin{equation*}
\forall x \in V, \quad \sigma_{\alpha}(x) := x - 2\frac{\langle \alpha, x\rangle}{\langle\alpha, \alpha \rangle}\alpha. 
\end{equation*}
In order to introduce the Dunkl operators, we assume that the root system is reduced:
\begin{equation*}
\forall \alpha \in R, \quad R \cap \mathbb{R} \alpha = \{\pm \alpha\},
\end{equation*}
but not necessarily crystallographic. Since $\{\alpha^\perp, \alpha \in R\}$ is a finite set of hyperplanes, we can choose $\beta \in V$ such that $\langle \alpha, \beta\rangle  \ne 0$ for all $\alpha \in R$. Doing so endows $R$ with a partial order and the set 
\begin{equation*}
R_+:=\{\alpha \in R:\,\langle \alpha,\beta \rangle > 0 \}
\end{equation*}
is called a positive system. To this choice of $R_+$ corresponds a unique set $S \subset R_+$  referred to as the simple system and such that every positive root $\alpha \in R_+$ is written as a positive linear combinations of simple roots.      
Besides, the set of simple reflections (those corresponding to simple roots) generates a finite group $W$, called the reflection group associated with $R$, which acts on $V$ by $x \mapsto wx$. With this action in hand, a function $k:R  \to  \mathbb{C}$ is named a multiplicity function if it is $W$-invariant:
\begin{equation*}
\forall w \in W, \quad \forall \alpha \in R, \quad k(w\alpha)=k(\alpha). 
\end{equation*}
Therefore, $k$ takes as many values as the number of orbits of the $W$-action on $R$. 

\subsection{Radial Dunkl processes associated with dihedral groups}
The dihedral group, $\mathcal{D}_2(n), n \geq 3$, is the group of orthogonal transformations that preserve a regular $n$-sided polygon in $V=\mathbb{R}^2$ centered at the origin (by convention, $\mathcal{D}_2(2) := \mathbb{Z}_2 \times \mathbb{Z}_2$). It contains $n$ rotations through multiples of $2\pi/n$ and $n$ reflections about the diagonals of the polygon. By a diagonal, we mean a line joining two opposite vertices or two midpoints of opposite sides if $n$ is even, or a vertex to the midpoint of the opposite side if $n$ is odd. Without loss of generality, one may assume that the $x$-axis is a mirror for the polygon. Then the corresponding dihedral root system, $I_2(n)$, is characterized by its positive and simple systems  given by: 
\begin{equation*}
R_+ = \{-i e^{i\pi l/n},\, 1 \leq l \leq n\},\quad S = \{e^{i\pi/n}e^{-i\pi/2}, e^{i\pi/2}\},
\end{equation*}  
so that the positive Weyl chamber $C$ is the wedge of angle $\pi/n$:
\begin{equation*}
C= \{(r, \theta), \, r > 0, \, 0 < \theta < \pi/n\}.  
\end{equation*}
In particular, the crystallographic systems are $I_2(3)$ which is isomorphic to $R=A_2$, $I_2(4) = B_2$ and $I_2(6)$ which is isomorphic to the exceptional root system of type $G_2$ (\cite{Dun-Xu}). When $n = 2p, p\geq 2$, the roots form two orbits so that there are two multiplicity values, say $k_0,k_1$. Otherwise, there is only one orbit and we denote by $k$ the corresponding multiplicity value. If the multiplicity function is non negative, the radial Dunkl process associated with an even dihedral group 
$\mathcal{D}_2(2p), p \geq 1$, is the diffusion $X$ valued in the dihedral wedge $\overline{C}$ whose infinitesimal generator acts on smooth functions as\footnote{The radial Dunkl process associated with odd dihedral groups $\mathcal{D}_2(n), n \geq 3$ is defined similarly via the subsitutions: $k_1 \rightarrow 0, k_0 \rightarrow k, p \rightarrow n$.}:
\begin{equation*}
\frac{1}{2}\left[\partial_r^2  + \frac{2\gamma +1}{r}\partial_r\right]  + \frac{1}{r^2}\left[\frac{\partial_{\theta}^2}{2} +p(k_0\cot(p\theta) - k_1\tan(p\theta))\partial_{\theta}\right],
\end{equation*}
subject to Neumann boundary conditions, where $\gamma := p(k_0+k_1)$ (\cite{Demni0}).  Let  
\begin{equation*}
T_0 := \inf\{t, X_t \in \partial C\},
\end{equation*}
be the first hitting time of $\partial C$ and recall from \cite{CDGRVY}, CH.II, that $T_0$ is almost surely finite when at least one multiplicity value is strictly less than $1/2$. Equivalently, define the so-called indices:
\begin{equation*}
\nu_j := k_j-(1/2), \,\, j \in \{0,1\},
\end{equation*}
and assume $1/2 \leq k_j \leq 1, j \in \{0,1\},$ with $k_0 > 1/2$ and/or $k_1 > 1/2$. Then $-\nu_j = (1-k_j) - 1/2$ are the indices corresponding to the multiplicity values $1-k_j, j \in \{0,1\},$ and the underlying radial Dunkl process hits almost surely $\partial C$ since  $1-k_1 < 1/2$ and/or $1-k_0 < 1/2$. In this respect, we shall denote $\mathbb{P}_{\rho,\phi}^{(-\nu_1, -\nu_0)}$ the probability law of a radial Dunkl process with indices $(-\nu_1, -\nu_0)$ and starting at $x = \rho e^{i\phi} \in C$.

\subsection{Special functions}
In this paragraph, we record the definitions of various special functions occurring in the remainder of the paper as well as some of their properties we will need in our subsequent computations. The reader is referred for instance to the standard books 
\cite{AAR}, \cite{Erd}. 
We start with the Gamma function: 
\begin{equation*}
\Gamma(x) = \int_0^{\infty} e^{-u}u^{x-1} du, \quad x > 0, 
\end{equation*}
and the Legendre duplication formula: 
\begin{equation}\label{Leg}
\sqrt{\pi}\Gamma(2x+1) = 2^{2x-1}\Gamma\left(x+\frac{1}{2}\right)\Gamma(x+1).
\end{equation}
Then, we recall the Pochhammer symbol:
\begin{equation*}
(a)_k = (a+k-1)\dots(a+1)a, \quad a \in \mathbb{R}, \, k \in \mathbb{N}, 
\end{equation*}
with the convention $(0)_k = \delta_{k0}$. If $a > 0$ then
\begin{equation*}
(a)_k = \frac{\Gamma(a+k)}{\Gamma(a)}
\end{equation*} 
while
\begin{equation}\label{I1}
\frac{(-n)_k}{k!} = (-1)^k \binom{n}{k}
\end{equation}
if $k \leq n$ and $(-n)_k = 0$ otherwise. Next comes the generalized hypergeometric function defined by the series 
\begin{equation*}
{}_rF_q((a_i, 1 \leq i \leq r), (b_j, 1 \leq j \leq q); z) = \sum_{m \geq 0}\frac{\prod_{i=1}^r(a_i)_m}{\prod_{j=1}^q(b_j)_m}\frac{z^m}{m!},
\end{equation*}
provided it converges absolutely. Here, an empty product equals one and the parameters $(a_i, 1 \leq i \leq r)$ are reals while $(b_j, 1 \leq j \leq q) \in \mathbb{R} \setminus -\mathbb{N}$. With regard to \eqref{I1}, 
the hypergeometric series terminates when at least $a_i = -n \in - \mathbb{N}$ for some $1 \leq i \leq r$, therefore reduces in this case to a polynomial of degree $n$. For instance, the family $(P_j^{(a,b)})_{j \geq 0}$ of Jacobi polynomials is defined by 
\begin{equation*}
P_j^{(a,b)}(u) = \frac{(a+1)_j}{j!} {}_2F_1(-j, j+a+b+1, a+1; u), \quad a,b > -1.
\end{equation*}
Here, ${}_2F_1$ is the Gauss hypergeometric function which admits the Euler integral representation: 
\begin{equation}\label{Gauss}
{}_2F_1(c,d,e; u) = \frac{\Gamma(e)}{\Gamma(c-d)\Gamma(d)} \int_0^1 (1-uz)^{-c}z^{d-1}(1-z)^{e-d-1} dz, \quad e > d > 0, |u| < 1, c \in \mathbb{R},
\end{equation}
and satisfies the quadratic transformation: 
\begin{equation}\label{Quad}
{}_2F_1(a,b,2a;u) = \frac{1}{(1-(u/2))^b}{}_2F_1\left(\frac{b}{2}, \frac{b+1}{2}, a+\frac{1}{2}; \frac{u^2}{(2-u)^2}\right), \quad |\arg(1-u)| < \pi.  
\end{equation}
The squared $L^2$-norm of the $j$-th Jacobi polynomial with respect to the weight $(1-u)^a(1+u)^b$ is given by 
\begin{equation}\label{SN}
||P_j^{(a,b)}||_2 = \frac{2^{a+b+1}}{2j+a+b+1} \frac{\Gamma(j+a+1)\Gamma(j+b+1)}{\Gamma(j+a+b+1) j!},
\end{equation}
and we shall denote $p_j^{(a,b)}$ the $j$-th orthonormal Jacobi polynomial $p_j^{(a,b)} := P_j^{(a,b)}/||P_j^{(a,b)}||_2$. Moreover, we shall need the following values: 
\begin{equation}\label{SpeVal}
P_j^{(a,b)}(1) = \frac{(a+1)_j}{j!}, \quad P_j^{(a,b)}(-1) = (-1)^j\frac{(b+1)_j}{j!},
\end{equation}
as well as the differentiation rule:
 \begin{equation}\label{Differ}
\frac{d}{du} P_{j+1}^{(a-1,b-1)}(u) = \frac{j+a+b}{2}P_{j}^{(a,b)}(u).
\end{equation}
When $a=b = \nu-1/2$, $P_j^{(a,a)}$ reduces up to a normalization to the $j$-th Gegenbauer polynomial of parameter $\nu$:
\begin{equation}\label{GegJac}
C_j^{(\nu)}(z) = \frac{(2\nu)_j}{(\nu+1/2)_j}P_j^{(\nu-1/2,\nu-1/2)}(z)  = \frac{(2\nu)_j}{j!}{}_2F_1\left(-j, j+2\nu, \nu+\frac{1}{2}, \frac{1-z}{2}\right).
\end{equation}
Now, the modified Bessel function of index $\kappa \in \mathbb{R}$ is defined by:  
\begin{equation*}
I_{\kappa}(u) := \sum_{j \geq 0}\frac{1}{\Gamma(\kappa+j+1) j!} \left(\frac{u}{2}\right)^{2j+\kappa}, \quad u \in \mathbb{C}.
\end{equation*}
We shall also denote by
\begin{equation*}
i_{\kappa}(u) := \left(\frac{2}{u}\right)^{\kappa}\Gamma(\kappa+1)I_{\kappa}(u) 
\end{equation*}
the normalized modified Bessel function and
\begin{equation*}
\mu^{s}(du) = \frac{\Gamma(s+1/2)}{\sqrt{\pi}\Gamma(s)}(1-u^2)^{s-1} {\bf 1}_{[-1,1]}(u)du, \quad s > 0,
\end{equation*}
the symmetric Beta distribution. Thus, the Poisson integral representation of $i_{\kappa}$ takes the form:
\begin{equation}\label{Poisson}
i_{\kappa -1/2}(u) = \int e^{zu} \mu^{\kappa}(du), \quad \kappa > -1/2. 
\end{equation}
Finally, the confluent hypergeometric function ${}_1F_1$ admits the following integral representation: 
\begin{align}
{}_1F_1\left(a, b, v\right) 
& = \frac{\Gamma(b)}{\Gamma(a)\Gamma(b-a)}\int_0^1e^{uv}u^{a-1}(1-u)^{b-a-1}du, \quad b > a > 0, \label{Euler}
\end{align}
and satisfies the Kummer relations:
\begin{eqnarray}
e^{-z}{}_1F_1(a,b;z) & = &{}_1F_1(b-a,b;-z), \label{Kum1} \\ 
{}_1F_1(a ,2a+1; x) & = & 2^{2a-1}\Gamma\left(a+\frac{1}{2}\right)e^{x/2}(-x)^{(1/2)-a}\left[{\it I}_{a -(1/2)}\left(-\frac{x}{2}\right) + {\it I}_{a +(1/2)}\left(-\frac{x}{2}\right)\right] \label{Kum2}.
\end{eqnarray}

\section{First hitting time of the boundary of a wedge of angle $\pi/4$ by a radial Dunkl process} 
Let $x = \rho e^{i\phi}, \rho > 0, 0 < \phi < \pi/4$ belong to the positive Weyl chamber associated with the even dihedral group $\mathcal{D}_2(4)$. In this section, we shall derive an integral representation for the density of 
\begin{equation*}
V_0 := \frac{\rho^2}{2T_0}
\end{equation*}
under the probability law $\mathbb{P}_{\rho,\phi}^{(-\nu, -\nu)}$, where we assume for sake of simplicity that $k_0 = k_1 := k \in (1/2,1]$ and set $\nu:= k-1/2$. To this end, recall from \cite{Demni0}, Proposition 3, that if $1/2 \leq k_0, k_1 \leq 1$ with at least one value strictly larger $1/2$, then \footnote{There is a misprint in the statement of Proposition 3 in \cite{Demni0}: $(\rho/\sqrt{t})^{\gamma - 2p}$ should be $(\rho/\sqrt{t})^{2\gamma - 2p}$.}:
\begin{multline}\label{Tail}
\mathbb{P}_{\rho,\phi}^{(-\nu_1, -\nu_0)}(T_0 > t) = c_{p,\nu_0,\nu_1} \sin^{2\nu_0}(p\phi) \cos^{2\nu_1}(p\phi) e^{-\rho^2/(2t)} 
\\ \sum_{j \geq 0} F(j) \left(\frac{\rho^2}{2t}\right)^{p(j+\nu_0+\nu_1)}{}_1F_1\left(a_j + 1, b_j+1, \frac{\rho^2}{2t}\right)p_j^{(\nu_1, \nu_0)}(\cos(2p\phi)),
\end{multline}
where $c_{p,\nu_0,\nu_1}$ is a normalizing constant, 
\begin{equation*}
a_j := p(j+1), \quad  b_j := p(2j+k_0+k_1) = p(2j+\nu_0+\nu_1+1),
\end{equation*}
and
\begin{equation*}
F(j) := \frac{\Gamma(a_j+1)}{\Gamma(b_j+1)} \int_{-1}^{1}p_j^{(\nu_1,\nu_0)}(s)ds = \frac{\Gamma(a_j+1)}{||P_j^{(\nu_1,\nu_0)}||_2\Gamma(b_j+1)} \int_{-1}^{1}P_j^{(\nu_1,\nu_0)}(s)ds.
\end{equation*}
Then, the issue of our computations is: 
\begin{pro}
Under $\mathbb{P}_{\rho,\phi}^{(-\nu, -\nu)}$, the density of $V_0$ is given up to a normalizing constant by: 
\begin{multline*}
\sin^{2\nu}(4\phi)e^{-v}v^{4\nu-1} \int \left\{{}_1F_1\left(2, 2\nu+\frac{3}{2}; \frac{v(1-\cos(2\phi)u)}{2}\right) + \right. 
\\ \left. {}_1F_1\left(2, 2\nu+\frac{3}{2}; \frac{v(1-\sin(2\phi)u)}{2}\right)\right\} \mu^{\nu+(1/2)}(du).
\end{multline*}
\end{pro}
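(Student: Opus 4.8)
The plan is to start from the series \eqref{Tail} for the tail of $T_0$, recognise it as the cumulative distribution function of $V_0$, differentiate it, and resum the resulting series. Since $v\mapsto \rho^2/(2v)$ is decreasing, $\{V_0<v\}=\{T_0>\rho^2/(2v)\}$, so under $\mathbb{P}_{\rho,\phi}^{(-\nu,-\nu)}$ the function $v\mapsto\mathbb{P}(V_0<v)$ is exactly the right‑hand side of \eqref{Tail} specialised to $p=2$, $\nu_0=\nu_1=\nu$ after the substitution $\rho^2/(2t)\rightsquigarrow v$; as $T_0<\infty$ almost surely this is a genuine distribution function on $(0,\infty)$, so the density of $V_0$ is its $v$-derivative, and the term‑by‑term differentiation is licit because each summand is entire in $v$ with the usual bounds on the $\Gamma$-ratios and on ${}_1F_1$. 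For $p=2$ and $\nu_0=\nu_1=\nu$ one has $a_j=2(j+1)$, $b_j=4j+4\nu+2$, and by \eqref{GegJac} the polynomial $p_j^{(\nu,\nu)}$ is a multiple of the Gegenbauer polynomial $C_j^{(\nu+1/2)}$; in particular $P_j^{(\nu,\nu)}(-s)=(-1)^jP_j^{(\nu,\nu)}(s)$, so $\int_{-1}^1 P_j^{(\nu,\nu)}(s)\,ds=0$ for odd $j$ and only the even indices $j=2m$ contribute, with $\int_{-1}^1P_{2m}^{(\nu,\nu)}(s)\,ds$ (hence $F(2m)$) given in closed form by \eqref{Differ} and \eqref{SpeVal}.

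Next I would differentiate term by term. Writing $v=\rho^2/(2t)$ and applying Kummer's relation \eqref{Kum1}, the $v$-dependent part of the $j$-th summand becomes $v^{\,b_j-a_j}\,{}_1F_1(b_j-a_j,b_j+1,-v)$; since $b_j-a_j=p(j+\nu_0+\nu_1)=2(j+2\nu)$ \emph{equals} the exponent of $v$, the elementary identity $\frac{d}{dv}\bigl[v^{c}\,{}_1F_1(c,b,-v)\bigr]=c\,v^{c-1}\,{}_1F_1(c+1,b,-v)$ applies. Hence the density of $V_0$ is, up to the normalising constant,
\begin{equation*}
\sin^{2\nu}(2\phi)\cos^{2\nu}(2\phi)\,v^{4\nu-1}\sum_{m\ge 0}(4m+4\nu)\,F(2m)\,v^{4m}\,{}_1F_1\bigl(4m+4\nu+1,\,8m+4\nu+3,\,-v\bigr)\,p_{2m}^{(\nu,\nu)}(\cos 4\phi).
\end{equation*}

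It remains to resum this series, which is the heart of the matter. I would (i) represent the confluent function by the Euler integral \eqref{Euler} (after a further use of \eqref{Kum1} if convenient) and likewise write $F(2m)$ as a Beta integral built from the closed form above, turning each summand into an integral over $[0,1]$; (ii) use the quadratic transformation \eqref{Quad}, equivalently the quadratic relation for Gegenbauer polynomials, to pass from the square argument $\sin^2(2\phi)$ occurring in the Gauss representation of $p_{2m}^{(\nu,\nu)}(\cos 4\phi)$ to the linear quantities $\cos(2\phi)$ and $\sin(2\phi)$ — this doubling is exactly what splits the final answer into its two confluent terms; (iii) carry out the sum over $m$ by a bilinear generating function / addition formula for Gegenbauer polynomials, which telescopes the $m$-dependence of the parameters of ${}_1F_1$ down to the single function ${}_1F_1(2,2\nu+\tfrac32,\cdot)$; and (iv) recognise, via the Poisson representation \eqref{Poisson}, that the remaining one-dimensional integral is integration against the symmetric Beta law $\mu^{\nu+1/2}$. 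Collecting constants and using $\sin^{2\nu}(4\phi)=2^{2\nu}\sin^{2\nu}(2\phi)\cos^{2\nu}(2\phi)$, everything but the stated integral is absorbed into the normalising constant; nonnegativity is then immediate since on the support of $\mu^{\nu+1/2}$ the arguments $\tfrac12 v(1-u\cos 2\phi)$ and $\tfrac12 v(1-u\sin 2\phi)$ are nonnegative and ${}_1F_1(2,2\nu+\tfrac32,x)>0$ for $x\ge 0$. The first two paragraphs are routine; the obstacle is step (iii)–(iv): pinning down the precise interplay of \eqref{Quad}, the Gegenbauer generating function and \eqref{Poisson} that collapses the $m$-indexed confluent functions to the fixed ${}_1F_1(2,2\nu+\tfrac32,\cdot)$, while controlling the numerous $\Gamma$-factors so that the surviving weight is exactly $\mu^{\nu+1/2}$.
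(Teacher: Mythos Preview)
Your first two paragraphs are essentially what the paper does: specialise \eqref{Tail} to $p=2$, $\nu_0=\nu_1=\nu$, observe via \eqref{Differ}--\eqref{SpeVal} that $F(j)$ vanishes for odd $j$, pass to Gegenbauer polynomials $C_{2m}^{(\nu+1/2)}$, apply Kummer \eqref{Kum1}, and differentiate term by term using exactly the identity you wrote. The paper phrases the outcome as: the density of $V_0$ is (up to constants) the \emph{even part} of the series
\[
\sin^{2\nu}(4\phi)\,e^{-v}\sum_{j\ge 0}\frac{\Gamma(2(j+1))}{\Gamma(4(j+k))}\,v^{2(j+2\nu)-1}\,{}_1F_1\bigl(2(j+1),4(j+k)+1,v\bigr)\,C_j^{(k)}(\cos 4\phi),
\]
with $k=\nu+\tfrac12$; your expression with ${}_1F_1(4m+4\nu+1,8m+4\nu+3,-v)$ is the Kummer--transformed form of the even subsequence, so you agree up to this point.

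The gap is in your steps (iii)--(iv), and you correctly flag it as ``the obstacle''. The paper does \emph{not} use the Gauss quadratic transformation \eqref{Quad}, nor a Gegenbauer addition/bilinear generating formula, nor \eqref{Poisson}; none of these seem to collapse the $j$--dependence of both the parameters and the exponent in ${}_1F_1(2(j+1),4(j+k)+1,v)$. The paper's resummation rests on two precise identities you have not named. First, Xu's integral identity
\[
C_j^{(k)}(2x^2-1)=\int_{-1}^1 C_{2j}^{(2k)}(ux)\,\mu^{k}(du),
\]
applied with $x=\cos 2\phi$, converts the series in $C_j^{(k)}(\cos 4\phi)$ into an integral, against $\mu^{k}=\mu^{\nu+1/2}$, of a series in $C_{2j}^{(2k)}(u\cos 2\phi)$; by symmetry of $\mu^k$ the resulting even--index sum is replaced by a full sum over all $j$ in $C_j^{(2k)}$. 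Second --- and this is the decisive step --- Erd\'elyi's multiplication theorem for ${}_1F_1$ (Erd\'elyi et al., \emph{Higher Transcendental Functions} I, p.~385, (7)),
\[
{}_1F_1(a,c;yz)=\sum_{j\ge 0}\frac{(-z)^j\Gamma(b+j)(a)_j}{\Gamma(b+2j)\,j!}\,{}_2F_1(-j,j+b,c;y)\,{}_1F_1(a+j,b+1+2j;z),
\]
specialised to $a=2$, $b=4k$, $c=2k+\tfrac12$, $z=v$, $y=(1-u\cos 2\phi)/2$, recognises the inner series \emph{exactly} and sums it to the single function ${}_1F_1\bigl(2,2k+\tfrac12;\tfrac{v(1-u\cos 2\phi)}{2}\bigr)$. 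The odd part of the original series is then obtained from the symmetry $C_j^{(k)}(-\cos 4\phi)=(-1)^jC_j^{(k)}(\cos 4\phi)$ and $-\cos 4\phi=\cos 4(\tfrac{\pi}{4}-\phi)$, which replaces $\cos 2\phi$ by $\sin 2\phi$ and yields the second confluent term. Without these two ingredients your plan does not close; the tools you propose in (ii)--(iv) are adjacent but not the ones that work here.
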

\begin{proof}
Since the proof is lengthy and quite technical, we shall omit the various normalizing constants which possibly and only depend on $p,k$ and simply write $\propto$ to mean 'proportional to'. Moreover, we firstly work out the right-hand side of \eqref{Tail} and prove secondly an auxiliary result which will be also used for later purposes. Using \eqref{SN}, \eqref{SpeVal} and \eqref{Differ}, we readily get 
\begin{align*}
 \frac{F(j)}{||P_j^{(\nu_1,\nu_0)}||_2} &= \frac{2}{(j+\nu_0+\nu_1)||\left(P_j^{(\nu_1,\nu_0)}||_2\right)^2}\frac{\Gamma(a_j+1)}{\Gamma(b_j+1)}  \left[P_{j+1}^{(\nu_1-1,\nu_0-1)}(1) - P_{j+1}^{(\nu_1-1,\nu_0-1)}(-1)\right]
 \\ &\propto \frac{\Gamma(j+\nu_0+\nu_1)\Gamma(a_j)}{\Gamma(b_j)}
 \left[\frac{1}{\Gamma(\nu_1)\Gamma(j+\nu_0+1)} + \frac{(-1)^j}{\Gamma(\nu_0)\Gamma(j+\nu_1+1)}\right].
 \end{align*}
Under the assumption $k_0=k_1 = k$, $F(2j+1) = 0$ while 
\begin{align*}
F(2j) &\propto \frac{\Gamma(a_{2j})}{\Gamma(b_{2j})}\frac{\Gamma(2(j+\nu))}{\Gamma(2j+\nu+1)}, \,\, j\geq 0.
\end{align*}
As a result, 
\begin{multline*}
\mathbb{P}_{\rho,\phi}^{(-\nu, -\nu)}(T_0 > t) \propto    \sin^{2\nu}(2p\phi) e^{-\rho^2/(2t)}\sum_{j \geq 0} \frac{\Gamma(a_{2j})}{\Gamma(b_{2j})}\frac{\Gamma(2(j+\nu))}{\Gamma(2j+\nu+1)}
\\ \left(\frac{\rho^2}{2t}\right)^{2p(j+\nu)}{}_1F_1\left(a_{2j}+1, b_{2j}+1, \frac{\rho^2}{2t}\right)P_{2j}^{(\nu, \nu)}(\cos(2p\phi)).
\end{multline*}
Using \eqref{GegJac}, the last expression may be written as
\begin{multline*}
\mathbb{P}_{\rho,\phi}^{(-\nu,-\nu)} (T_0 > t) \propto   \sin^{2\nu}(2p\phi) e^{-\rho^2/(2t)}  \\ \sum_{j \geq 0} \frac{\Gamma(a_{2j})}{(2j+2\nu)\Gamma(b_{2j})} \left(\frac{\rho^2}{2t}\right)^{2p(j+\nu)}{}_1F_1\left(a_{2j}+1, b_{2j}+1, \frac{\rho^2}{2t}\right)
C_{2j}^{(\nu+1/2)}(\cos(2p\phi)),
\end{multline*}
or equivalently as
\begin{align*}
\mathbb{P}_{\rho,\phi}^{(-\nu,-\nu)}(V_0 < v) \propto  & \sin^{2\nu}(2p\phi) e^{-v}\sum_{j \geq 0} \frac{\Gamma(a_{2j})}{(2j+2\nu)\Gamma(b_{2j})} v^{2p(j+\nu)}{}_1F_1\left(a_{2j}+1, b_{2j}+1, v\right) C_{2j}^{(\nu+1/2)}(\cos(2p\phi)),
\end{align*}
where $v := \rho^2/(2t)$. This series is the even part of 
\begin{align*}
\sin^{2\nu}(2p\phi) e^{-v}\sum_{j \geq 0} \frac{\Gamma(a_j)}{(j+2\nu)\Gamma(b_j)} v^{p(j+2\nu)}{}_1F_1\left(a_j+1, b_j+1, v\right)  C_{j}^{(\nu+1/2)}(\cos(2p\phi))
\end{align*}
which, by Kummer first relation \eqref{Kum1}, transforms into 
\begin{align*}
\sin^{2\nu}(2p\phi) \sum_{j \geq 0} \frac{\Gamma(a_j)}{(j+2\nu)\Gamma(b_j)} v^{p(j+2\nu)}{}_1F_1\left(b_j-a_j, b_j+1, -v\right)  C_{j}^{(\nu+1/2)}(\cos(2p\phi)).
\end{align*}
Since $b_j - a_j = 2p(j+k) - p(j+1) = p(j+2\nu)$, then 
\begin{equation*}
\frac{v^{p(j+2\nu)}}{p(j+2\nu)}{}_1F_1\left(b_j-a_j, b_{j}+1, -v\right) = \sum_{m \geq 0}(-1)^m \frac{(p(j+2\nu)+1)_m}{(b_j+1)_m} \frac{v^{m+p(j+2\nu)}}{(p(j+2\nu) + m)m!}
\end{equation*}
and the derivative of the right-hand side of this last equality with respect to $v$ is given by 
\begin{align*}
\sum_{m \geq 0}(-1)^m \frac{(p(j+2\nu)+1)_m}{(b_j+1)_m} \frac{v^{m+p(j+2\nu)-1}}{m!} & = v^{p(j+2\nu)-1}{}_1F_1\left(b_j-a_j+1, b_j+1, -v\right)
\\& = e^{-v}v^{p(j+2\nu)-1}{}_1F_1\left(a_j, b_j+1, v\right).
\end{align*}
As a result, the density of $V_0$ is proportional to the even part of the series 
\begin{equation}\label{Serie1}
\sin^{2\nu}(2p\phi) e^{-v}\sum_{j \geq 0}\frac{\Gamma(a_j)}{\Gamma(b_j)}v^{p(j+2\nu)-1}{}_1F_1\left(a_j, b_j+1, v\right)C_{j}^{(k)}(\cos(2p\phi)).
\end{equation}
When $p=2$, $a_j = 2(j+1), b_j = 4(j+k)$ and this series admits the following integral representation:
\begin{lem}\label{lem1}
For any $k > 0$, 
\begin{multline*}
\sum_{j \geq 0}\frac{\Gamma(2(j+1))}{\Gamma(4(j+k))}v^{2j}{}_1F_1\left(2(j+1), 4(j+k) + 1, v\right)C_{j}^{(k)}(\cos(4\phi)) = \frac{1}{\Gamma(4k)} \\ \int {}_1F_1\left(2, 2k+\frac{1}{2}; \frac{v(1-\cos(2\phi)u)}{2}\right)\mu^{k}(du).
\end{multline*}
\end{lem}
Assume for a while that the lemma holds true. Then, the symmetry relation
\begin{equation}\label{Symmetry}
(-1)^jC_{j}^{(k)}(\cos(2p\phi)) = C_j^{(k)}(-\cos(2p\phi)) = C_j^{(k)}\left(\cos\left(2p\left(\frac{\pi}{2p} - \phi\right)\right)\right)
\end{equation}
shows that 
\begin{multline*}
\sum_{j \geq 0}(-1)^j\frac{\Gamma(2(j+1))}{\Gamma(4(j+k))}v^{2j}{}_1F_1\left(2(j+1), 4(j+k) + 1, v\right)C_{j}^{(k)}(\cos(4\phi)) = \frac{1}{\Gamma(4k)} \\ \int {}_1F_1\left(2, 2k+\frac{1}{2}; \frac{v(1-\sin(2\phi)u)}{2}\right)\mu^{k}(du).
\end{multline*}
Recalling $\nu = k-1/2$, the proposition is proved. 
\end{proof}

\begin{proof}[Proof of the lemma]
It mainly follows from an identity satisfied by Gegenbauer polynomials and from Erdelyi multiplication Theorem. Actually, the following identity was noticed in \cite{Xu}: for any $x \in [-1,1]$,
\begin{equation}\label{IR1}
C_j^{(k)}(2x^2 -1) = \int_{-1}^1 C_{2j}^{(2k)}(ux) \mu^{k}(du),
\end{equation}
while Erdelyi multiplication Theorem asserts that (\cite{Er}, p.385, (7)):
\begin{equation}\label{EMT}
{}_1F_1(a,c;yz)  = \sum_{j \geq 0} \frac{(-z)^j\Gamma(b+j)(a)_j}{\Gamma(b+2j)j!} {}_2F_1(-j, j+b, c; y){}_1F_1(a+j, b+1+2j; z),
\end{equation}
where $z,a,b,c > 0, |y| < 1$. Now, \eqref{IR1} leads to 
\begin{equation*}
 \int \sum_{j \geq 0}\frac{\Gamma(2(j+1))}{\Gamma(4(j+k))}v^{2j}{}_1F_1\left(2(j+1), 4(j+k) + 1, v\right)C_{2j}^{(2k)}(u\cos(2\phi))\mu^{k}(du) 
\end{equation*} 
which is the sum of 
\begin{equation*}
\frac{1}{2} \int \left\{\sum_{j \geq 0}(\pm 1)^j \frac{\Gamma(j+2)}{\Gamma(2j+4k)}v^{j}{}_1F_1\left(j+12, 2j+4k + 1, v\right)C_{j}^{(2k)}(u\cos(2\phi))\right\}\mu^{k}(du). 
\end{equation*} 
Since $\mu^k$ is symmetric, then \eqref{Symmetry} shows that both series give the same contribution after integration with respect to $u$. Besides, the second equality of \eqref{GegJac} shows that 
\begin{multline*}
\sum_{j \geq 0}\frac{\Gamma(j+2)}{\Gamma(2j+4k)}(-v)^{j}{}_1F_1\left(j+12, 2j+4k + 1, v\right)C_{j}^{(2k)}(u\cos(2\phi)) = \sum_{j \geq 0}\frac{(2)_j(4k)_j}{\Gamma(2j+4k)j!} (-v)^{j}
\\ {}_1F_1\left(j+2, 2j+4k + 1, v\right){}_2F_1\left(-j, j+4k, 2k + \frac{1}{2}; \frac{1-u\cos(2\phi)}{2}\right)
\end{multline*}
which, owing to Erdelyi multiplication Theorem specialized to $a = 2, b=4k, c = 2k+(1/2), z=v, y = (1-u\cos(2\phi))/2$, proves the lemma.
\end{proof}

\begin{cor}
If $\nu \in (1/4, 1/2]$, then the density of $V_0$ reads (up to a normalizing constant):
\begin{equation*}
\sin^{2\nu}(4\phi)e^{-v/2}v^{2\nu-(3/2)} \int_0^ve^{-y/2}y^{2\nu-(3/2)}(v-y)\left[i_{\nu}\left(\frac{\cos(2\phi)}{2}(v-y)\right) + i_{\nu}\left(\frac{\sin(2\phi)}{2}(v-y)y\right)\right] dy.
\end{equation*}
 \end{cor}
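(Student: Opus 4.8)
The plan is to start from the integral representation of the density of $V_0$ supplied by Proposition 1 and to convert the confluent hypergeometric functions ${}_1F_1(2, 2\nu+3/2;\cdot)$ appearing there into normalized modified Bessel functions $i_\nu$, by chaining Euler's integral representation \eqref{Euler} with the Poisson representation \eqref{Poisson}. The decisive point is that the hypothesis $\nu\in(1/4,1/2]$ is exactly what licenses this: Euler's formula \eqref{Euler} for ${}_1F_1(a,b;\cdot)$ requires $b>a>0$, and here $b=2\nu+3/2>2=a$ holds precisely when $\nu>1/4$ (equivalently, $\int_0^1 t(1-t)^{2\nu-3/2}\,dt$ converges iff $2\nu-3/2>-1$); the upper bound $\nu\le 1/2$ is just the standing assumption $k\le 1$.

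Concretely, I would first apply \eqref{Euler} with $a=2$, $b=2\nu+3/2$ to write, up to a constant depending only on $k$,
\[
{}_1F_1\!\left(2, 2\nu+\tfrac32; w\right) \;\propto\; \int_0^1 e^{tw}\, t\,(1-t)^{2\nu-3/2}\,dt ,
\]
and substitute this, with $w=v(1-u\cos(2\phi))/2$ and $w=v(1-u\sin(2\phi))/2$ respectively, inside the $\mu^{\nu+1/2}$-integral of Proposition 1. Writing $e^{tw}=e^{tv/2}e^{-tvu\cos(2\phi)/2}$ and exchanging the order of the $t$- and $u$-integrations (legitimate since all integrands are nonnegative), the inner $u$-integral becomes $\int e^{-tvu\cos(2\phi)/2}\,\mu^{\nu+1/2}(du)$, which by symmetry of $\mu^{\nu+1/2}$ and the Poisson representation \eqref{Poisson} with $\kappa=\nu+1/2$ equals $i_\nu\!\left(tv\cos(2\phi)/2\right)$. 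Doing the same with the $\sin(2\phi)$ term, the density of $V_0$ is proportional to
\[
\sin^{2\nu}(4\phi)\, e^{-v}\, v^{4\nu-1}\int_0^1 e^{tv/2}\, t\,(1-t)^{2\nu-3/2}\left[\, i_\nu\!\left(\tfrac{\cos(2\phi)}{2}\,tv\right)+ i_\nu\!\left(\tfrac{\sin(2\phi)}{2}\,tv\right)\right]dt .
\]

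Finally I would perform the change of variables $y=v(1-t)$, which sends $[0,1]$ onto $[0,v]$; then $t=(v-y)/v$, $(1-t)^{2\nu-3/2}=(y/v)^{2\nu-3/2}$, $dt=dy/v$, $e^{tv/2}=e^{v/2}e^{-y/2}$, and $i_\nu(\tfrac{\cos(2\phi)}{2}tv)=i_\nu(\tfrac{\cos(2\phi)}{2}(v-y))$, and likewise for the $\sin$ term. Collecting the powers of $v$ produced by the substitution together with the prefactor $v^{4\nu-1}$ and the exponential factor $e^{-v}e^{v/2}=e^{-v/2}$, the power $v^{4\nu-1}$ is reduced to $v^{2\nu-3/2}$ and one obtains exactly the asserted expression. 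The argument is essentially routine once the right representation is in hand; the only genuine subtlety is verifying the validity range $\nu>1/4$ for Euler's formula and carefully tracking the constants and the powers of $v$. (I note that the $y$ occurring inside the second Bessel function in the statement appears to be a typographical slip for the argument $(v-y)$ that this computation produces.)
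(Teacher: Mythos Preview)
Your proposal is correct and follows essentially the same route as the paper: apply Euler's integral representation \eqref{Euler} (valid precisely because $\nu>1/4$ gives $2\nu+3/2>2$), swap integrals via Fubini, collapse the $\mu^{\nu+1/2}$-integral to $i_\nu$ through the Poisson representation \eqref{Poisson}, and finish with a change of variables. The only cosmetic difference is that the paper performs the substitution in two steps ($y\mapsto y/v$ followed by $y\mapsto v-y$) whereas you do it in one shot with $y=v(1-t)$; your remark about the stray $y$ in the second Bessel argument is also correct.
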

\begin{proof}
The assumption made on $\nu$ ensures the validity of the integral representation \eqref{Euler}: 
\begin{equation*}
{}_1F_1\left(2, 2\nu+\frac{3}{2}; \frac{v(1-u\cos(2\phi))}{2}\right) \propto \int_0^1e^{yv(1-u\cos(2\phi)/2}y(1-y)^{2\nu-(3/2)} dy.
\end{equation*}
Using Fubini Theorem and \eqref{Poisson}, it follows that 
\begin{equation*}
\int {}_1F_1\left(2, 2\nu+\frac{3}{2}; \frac{v(1-\cos(2\phi)u)}{2}\right)\mu^{\nu+(1/2)}(du) = \int_0^1e^{vy/2}i_{\nu}\left(\frac{v\cos(2\phi)}{2}y\right)y(1-y)^{2\nu-(3/2)}dy
\end{equation*}
and similarly 
\begin{equation*}
\int {}_1F_1\left(2, 2\nu+\frac{3}{2}; \frac{v(1-\sin(2\phi)u)}{2}\right)\mu^{\nu+(1/2)}(du) = \int_0^1e^{vy/2}i_{\nu}\left(\frac{v\sin(2\phi)}{2}y\right)y(1-y)^{2\nu-(3/2)}dy.
\end{equation*}
Together with the variable change $y \mapsto y/v$ yield the formula:
\begin{equation*}
\sin^{2\nu}(4\phi)e^{-v}v^{2\nu-(3/2)} \int_0^ve^{y/2}\left[i_{\nu}\left(\frac{\cos(2\phi)}{2}y\right) + i_{\nu}\left(\frac{\sin(2\phi)}{2}y\right)\right] y(v-y)^{2\nu-(3/2)} dy.
\end{equation*}
Multiplying the integral by $e^{-v/2}$ and performing the variable change $y \mapsto v-y$, the corollary is proved 
\end{proof}

\begin{rem}
In the rank-one case $R = \{\pm 1\}$, the reflection group is $\mathbb{Z}_2$ and the positive Weyl chamber is $(0,\infty)$. In this case, the radial Dunkl process is a Bessel process of index $\nu = k-(1/2)$ and it is already known that this process hits zero almost surely when $k \in [0,1/2)$ (\cite{Rev-Yor}, CH.XI). Besides, the distribution of the first hitting time of zero is the reciprocal of a Gamma distribution of parameter $\nu$ and this fact follows for instance from a result due to Dufresne on the distribution of the exponential functional of a Brownian motion of drift $\nu$ (see e.g. \cite{MY}, p.3). As a matter of fact, for the abelian group $\mathcal{D}_2(2) = \mathbb{Z}_2 \times \mathbb{Z}_2$, $T_0$ is the infimum of two independent reciprocal Gamma variables of parameter $\nu$ so that the density of $V_0$ reads 
\begin{equation*}
(\sin\phi)^{2\nu}v^{\nu-1}e^{-v\sin^2\phi} \int_0^{v\cos^2\phi} e^{-u} u^{\nu-1}du + (\cos\phi)^{2\nu}v^{\nu-1}e^{-v\cos^2\phi} \int_0^{v\sin^2\phi} e^{-u} u^{\nu-1}du.
\end{equation*}
 Hence, the previous corollary provides an analogue of Dufresne's result for the non abelian group $\mathcal{D}_2(4)$. 
\end{rem}

\begin{cor}
If $\phi = \pi/8$ then, for any $y \geq 0$, 
\begin{align*}
\mathbb{E}_{\rho, \pi/8}^{(-\nu,-\nu)}\left(V_0^{(3/2)-2\nu}e^{-yV_0}\right)  \propto \frac{1}{(1+y)^{2\nu- (1/2)}(1+2y)^2} {}_2F_1\left(1, \frac{3}{2}, \nu+ 1; \frac{1}{2(1+2y)^2}\right).
\end{align*}
\end{cor}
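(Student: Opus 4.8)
The plan is to substitute the density of $V_0$ from the Proposition, specialized to $\phi = \pi/8$, into the definition of the moment, perform the $v$-integration via a Laplace transform of the confluent hypergeometric function, and then reduce to a single Beta integral. When $\phi = \pi/8$ one has $2\phi = \pi/4$, hence $\cos(2\phi) = \sin(2\phi) = 1/\sqrt{2}$ and $\sin(4\phi) = 1$; the two confluent hypergeometric terms in the Proposition therefore coincide and the density of $V_0$ under $\mathbb{P}_{\rho,\pi/8}^{(-\nu,-\nu)}$ is proportional to
\[
e^{-v}v^{4\nu-1}\int {}_1F_1\!\left(2, 2\nu+\frac{3}{2}; \frac{v}{2}\left(1 - \frac{u}{\sqrt{2}}\right)\right)\mu^{\nu+(1/2)}(du).
\]
Multiplying by $v^{(3/2)-2\nu}e^{-yv}$ and integrating over $v \in (0,\infty)$, the powers of $v$ combine into $v^{2\nu+(1/2)}$ and the exponentials into $e^{-(1+y)v}$. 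As the integrand is nonnegative (a ${}_1F_1$ with positive argument being a series with positive coefficients) and, for $\nu \in (1/4,1/2]$ and $y \geq 0$, integrable over $(0,\infty) \times [-1,1]$ — near $v = 0$ it behaves like $v^{2\nu+(1/2)}$, and for large $v$ like $e^{-(1-c)v}$ with $c = (1-u/\sqrt{2})/2 < 1$ — Fubini's theorem lets us integrate in $v$ first.

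For the $v$-integral I would invoke the identity, obtained termwise from the defining series of ${}_1F_1$,
\[
\int_0^\infty v^{s-1}e^{-pv}\,{}_1F_1(a,b;\lambda v)\,dv = \frac{\Gamma(s)}{p^{s}}\,{}_2F_1\!\left(a, s; b; \frac{\lambda}{p}\right), \qquad s > 0,\ 0 < \lambda < p,
\]
applied with $a = 2$, $s = b = 2\nu+(3/2)$, $p = 1+y$ and $\lambda = (1-u/\sqrt{2})/2$, which satisfies $0 < \lambda < p$ for $y \geq 0$. The crucial point is that when $s = b$ the Gauss function degenerates, ${}_2F_1(2,b;b;t) = (1-t)^{-2}$; since $1 - \lambda/p = (1+2y+u/\sqrt{2})/(2(1+y))$, the $v$-integral equals a constant times $(1+y)^{(1/2)-2\nu}(1+2y+u/\sqrt{2})^{-2}$, whence
\[
\mathbb{E}_{\rho,\pi/8}^{(-\nu,-\nu)}\!\left(V_0^{(3/2)-2\nu}e^{-yV_0}\right) \propto (1+y)^{(1/2)-2\nu}\int \frac{\mu^{\nu+(1/2)}(du)}{\left(1+2y+u/\sqrt{2}\right)^{2}}.
\]

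The remaining step, which I expect to be the main obstacle, is to evaluate this symmetric Beta integral and recognize the Gauss function with parameters $(1, 3/2, \nu+1)$. Writing $A = 1+2y$ and $z = 1/(\sqrt{2}\,A)$, so that $z^2 = 1/(2A^2)$ is precisely the argument that should appear, I would use the symmetry of $\mu^{\nu+(1/2)}$ to replace $(1+zu)^{-2}$ by its even part $\tfrac12\big[(1+zu)^{-2}+(1-zu)^{-2}\big] = (1+z^2u^2)(1-z^2u^2)^{-2}$, then substitute $w = u^2$ to reduce the integral to a multiple of $\int_0^1 w^{-1/2}(1-w)^{\nu-(1/2)}(1+z^2w)(1-z^2w)^{-2}\,dw$. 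Expanding $(1+xw)(1-xw)^{-2} = \sum_{m\geq 0}(2m+1)(xw)^m$ with $x = z^2$ and integrating termwise against the Beta kernel produces, up to the normalizing constant of $\mu^{\nu+(1/2)}$, the series $\sum_{m\geq 0}(2m+1)\,(1/2)_m\,x^m/(\nu+1)_m$; the elementary identity $2m+1 = (3/2)_m/(1/2)_m$ together with $(1)_m = m!$ collapses it to ${}_2F_1(1,\tfrac32;\nu+1;x)$. Hence $\int (1+2y+u/\sqrt{2})^{-2}\,\mu^{\nu+(1/2)}(du) \propto (1+2y)^{-2}\,{}_2F_1\!\left(1,\tfrac32;\nu+1;\tfrac{1}{2(1+2y)^2}\right)$, and combining with the prefactor $(1+y)^{(1/2)-2\nu}$ gives the claimed identity. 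Alternatively, this last Beta integral can be read off directly from the Euler representation \eqref{Gauss} after the substitution $w=u^2$ and the same splitting of $(1\pm zu)^{-2}$ into even parts.
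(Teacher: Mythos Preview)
Your proof is correct and follows the paper up to the intermediate identity
\[
\mathbb{E}_{\rho,\pi/8}^{(-\nu,-\nu)}\!\left(V_0^{(3/2)-2\nu}e^{-yV_0}\right) \propto (1+y)^{(1/2)-2\nu}\int \frac{\mu^{\nu+(1/2)}(du)}{\left(1+2y+u/\sqrt{2}\right)^{2}},
\]
which the paper obtains in exactly the same way (expanding ${}_1F_1$ termwise and integrating in $v$, yielding ${}_1F_0(2;\cdot)=(1-\cdot)^{-2}$). Where you diverge is in the evaluation of the remaining $u$-integral. The paper performs the linear change $z=(1-u)/2$ to recognize the Euler integral \eqref{Gauss} for ${}_2F_1\bigl(2,\nu+\tfrac12,2\nu+1;\,2/(1-\sqrt{2}(1+2y))\bigr)$, and then applies the quadratic transformation \eqref{Quad} to pass to ${}_2F_1\bigl(1,\tfrac32,\nu+1;\,1/(2(1+2y)^2)\bigr)$. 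You instead exploit the symmetry of $\mu^{\nu+1/2}$ to replace the integrand by its even part $(1+z^2u^2)(1-z^2u^2)^{-2}$, substitute $w=u^2$, and expand termwise; the identity $(3/2)_m/(1/2)_m=2m+1$ then produces ${}_2F_1(1,\tfrac32;\nu+1;\cdot)$ directly. Your route is slightly more elementary in that it bypasses the quadratic transformation, effectively deriving the special case of \eqref{Quad} needed here from the parity of the measure; the paper's route has the advantage of staying within closed hypergeometric identities. One minor remark: your restriction ``$\nu\in(1/4,1/2]$'' is unnecessary for the Fubini justification, since the $v$-integrand behaves like $v^{2\nu+1/2}$ at the origin, integrable for all $\nu>0$.
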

\begin{proof}
When $\phi = \pi/8$, the expression of the density of $V_0$ simplifies to (up to a constant): 
\begin{equation*}
e^{-v}v^{4\nu-1} \int {}_1F_1\left(2, 2\nu+\frac{3}{2}; \frac{v(1-u/\sqrt{2})}{2}\right)\mu^{\nu+(1/2)}(du).
\end{equation*}
Now, multiply by $v^{-2\nu + (3/2)}e^{-yv}, y > 0$, expand the confluent hypergeometric function and integrate with respect to $v$ to get 
\begin{align*}
\mathbb{E}_{\rho, \pi/8}^{(-\nu,-\nu)}\left(V_0^{(3/2)-2\nu}e^{-yV_0}\right) & \propto \frac{1}{(1+y)^{2\nu+(3/2)}}\int {}_1F_0\left(2, \frac{(1-u/\sqrt{2})}{2(1+y)}\right)\mu^{\nu+(1/2)}(du)
\\& \propto \frac{1}{(1+y)^{2\nu+(3/2)}}\int_{-1}^1 \frac{(1+y)^2}{(1+2y - u/\sqrt{2})^2}(1-u^2)^{\nu-(1/2)}du
\\& \propto \frac{1}{(1+y)^{2\nu- (1/2)}[1-\sqrt{2}(1+2y)]^2} {}_2F_1\left(2,\nu+\frac{1}{2}, 2\nu+1; \frac{2}{1-\sqrt{2}(1+2y)}\right)
\end{align*}
where the last line follows from the variable change $z=(1-u)/2$ and \eqref{Gauss}. Applying the quadratic transformation \eqref{Quad}, the proposition follows after few computations. 
\end{proof}

\begin{rem}
The index value $-\nu=-1/2$ corresponds to a null multiplicity function in which case the radial Dunkl process reduces to a planar Brownian motion reflected at $\partial C$. Since the reflected and the non reflected processes have the same distribution up to $T_0$, then $T_0$ has the same distribution in this case as the first exit time from $C$ by a planar Brownian motion. In particular, the result of the corollary coincides with Proposition 1.1. in \cite{Vak-Yor} for the special value $c = \pi/8$: 
\begin{equation*}
\mathbb{E}_{\rho, \pi/8}^{(-1/2,-1/2)}\left(V_0^{1/2}e^{-yV_0}\right) \propto \frac{1}{\sqrt{1+y}[2(1+2y)^2-1]}.
\end{equation*}
\end{rem}

\section{Exit time of a planar Brownian motion from a dihedral wedge}
In this section, we give a particular interest to the exit time of a planar Brownian motion from a dihedral wedge which already attracted the attention of probabilists and physicists. For instance, the tail distribution of this random variable was derived in \cite{Dou-Oco} using the reflection principle and combinatorial arguments, and in an unpublished manuscript by A. Comtet who solved the heat equation in an arbitrary wedge with Dirichlet boundary conditions (\cite{Com}). Another expression of this probability was also obtained in \cite{Bra} for arbitrary wedges as a series of modified Bessel functions. For even dihedral wedges $\pi/(2p), p \geq 1$, the latter can be matched with \eqref{Tail} specialized to $k_0 = k_1 = 1$ as follows (a similar claim holds true for odd dihedral wedges). Using \eqref{Kum1}, \eqref{Tail} is written in this case as:
\begin{multline*}
\mathbb{P}_{\rho,\phi}^{(-1/2,-1/2)}(T_0 > t) \propto \sin(2p\phi) e^{-\rho^2/(2t)}
\\ \sum_{j \geq 0} F(j) \left(\frac{\rho^2}{2t}\right)^{p(j+1)}{}_1F_1\left((j+1)p+1, 2(j+1)p+1, \frac{\rho^2}{2t}\right)p_j^{(1/2,1/2)}(\cos(2p\phi))
\\ = \sin(2p\phi)  \sum_{j \geq 0} F(j) \left(\frac{\rho^2}{2t}\right)^{p(j+1)}{}_1F_1\left((j+1)p, 2(j+1)p+1, -\frac{\rho^2}{2t}\right)p_j^{(1/2,1/2)}(\cos(2p\phi)),
\end{multline*}
where we recall
\begin{equation*}
F(j) = \frac{\Gamma(p(j+1)+1)}{\Gamma(2p(j+1)+1)} \int_{-1}^{1}p_j^{(1/2,1/2)}(s)ds = \frac{\Gamma(p(j+1)+1)}{||P_j^{(1/2,1/2)}||_2\Gamma(2p(j+1)+1)} \int_{-1}^{1}P_j^{(1/2,1/2)}(s)ds.
\end{equation*}
Now, let $U_j$ be the $j$-th Tchebycheff polynomial of the second kind: 
\begin{equation*}
U_j(\cos \phi) := \frac{\sin[(j+1)\phi]}{\sin\phi} = \frac{(j+1)!}{(3/2)_j} P_j^{(1/2,1/2)}(\cos\phi),
\end{equation*}
and use \eqref{Leg} and \eqref{SN} to derive
\begin{eqnarray*}
\Gamma(2p(j+1)+1) &=& \frac{2^{2p(j+1)}}{\sqrt{\pi}} \Gamma\left(p(j+1) + \frac{1}{2}\right)\Gamma(p(j+1)+1) \\ 
||P_j^{(1/2,1/2)}||_2 &=& 2 \left[\frac{\Gamma(j+(3/2))}{(j+1)!}\right]^2.
\end{eqnarray*}
Then, \eqref{Kum2} entails
\begin{equation*}
\mathbb{P}_{\rho,\phi}^{(-1/2,1/2)}(T_0 > t) \propto e^{-\rho^2/(4t)} \sqrt{\frac{\rho^2}{2t}}\sum_{j \geq 0} S(j) \left[{\it I}_{(j+1)p -(1/2)}\left(\frac{\rho^2}{4t}\right) + {\it I}_{(j+1)p +(1/2)}\left(\frac{\rho^2}{4t}\right)\right] \sin[2(j+1)p\phi],
\end{equation*}
where
\begin{equation*}
S(j) := \int_{0}^{\pi} \sin((j+1)y) dy = \frac{1-\cos((j+1)\pi)}{(j+1)}, \quad j \geq 0.
\end{equation*}
Since $S(j) = 0$ when $j$ is odd and $S(j) = 2/(j+1)$ when $j$ is even, then 
\begin{equation*}
\mathbb{P}_{\rho,\phi}^{(-1/2,1/2)}(T_0 > t) \propto e^{-\rho^2/(4t)} \sqrt{\frac{\rho^2}{2t}}\sum_{j \in \mathbb{Z}} \left[{\it I}_{|(2j+1)|p -(1/2)}\left(\frac{\rho^2}{4t}\right) + {\it I}_{|(2j+1)|p +(1/2)}\left(\frac{\rho^2}{4t}\right)\right] \frac{\sin[2(2j+1)p\phi]}{2j+1},
\end{equation*}
which is, up to the normalizing factor $1/\sqrt{\pi}$, formula (2.20) in \cite{Bra} for dihedral wedges $\beta = \pi/(2p), p \geq 1$. A particular feature of this formula is its connection with a result due to Spitzer on the argument of the planar Brownian motion which allows to get the following representation: 
\begin{pro}
 Let $W_p$ be the square wave function with period $\pi/p$ and amplitude one: 
\begin{equation*}
W_p(x) := \textrm{sgn}(\sin(2px)).
\end{equation*}
and let $Z$ be a planar Brownian motion with angular process $\Theta$. Then 
\begin{equation*}
\mathbb{P}_{\rho,\phi}^{(-1/2,-1/2)}(T_0 > t) = \frac{1}{2} \mathbb{E}_{\rho,0}\left[W_p(\Theta_t+\phi)\right] = \frac{1}{2} \mathbb{E}_{\rho,\phi}\left[W_p(\Theta_t)\right],
\end{equation*}
where $\mathbb{E}_{\rho,\phi}$ stands for the expectation with respect to the probability law $\mathbb{P}_{\rho,\phi}$ of $Z$ starting at $(\rho, \phi) \in C$. 
\end{pro}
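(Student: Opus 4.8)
The plan is to read the identity off from the series of modified Bessel functions just obtained for $\mathbb{P}_{\rho,\phi}^{(-1/2,-1/2)}(T_0>t)$ — which, as observed, coincides up to the factor $1/\sqrt{\pi}$ with formula (2.20) of \cite{Bra} — by recognizing its three ingredients separately. The prefactor $e^{-\rho^2/(4t)}\sqrt{\rho^2/(2t)}$ together with the combinations $I_{np-1/2}(\rho^2/(4t))+I_{np+1/2}(\rho^2/(4t))$, indexed by odd integers $n$, will come from Spitzer's characteristic function of the angular process $\Theta$, while the coefficients $\sin(2np\phi)/n$ will come from the Fourier expansion of the square wave $W_p$; the symmetry $\mathbb{E}_{\rho,0}[\sin(\lambda\Theta_t)]=0$ glues the two together.

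First I would rewrite Spitzer's characteristic function in Bessel form. By the skew-product decomposition $\Theta_t=\gamma_{H_t}$ with $H_t:=\int_0^t R_s^{-2}\,ds$ of the angular part of $Z=Re^{i\Theta}$ ($R$ a two-dimensional Bessel process, $\gamma$ an independent real Brownian motion from $0$), one has $\mathbb{E}_{\rho,0}[e^{i\lambda\Theta_t}]=\mathbb{E}_\rho[e^{-\lambda^2H_t/2}]$, which is real, and since complex conjugation fixes $(\rho,0)$ and preserves the law of $Z$ one has $\Theta\stackrel{d}{=}-\Theta$ under $\mathbb{P}_{\rho,0}$, hence $\mathbb{E}_{\rho,0}[\sin(\lambda\Theta_t)]=0$. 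The absolute-continuity relation between Bessel processes of dimensions $2$ and $2|\lambda|+2$ evaluates this quantity to $e^{-\rho^2/(2t)}\int_0^\infty e^{-u}I_{|\lambda|}(2\sqrt{u\rho^2/(2t)})\,du$, which is Spitzer's formula (\cite{Spi}); expanding $I_{|\lambda|}$ and integrating term by term gives a ${}_1F_1$ of argument $\rho^2/(2t)$, to which Kummer's relations \eqref{Kum1} and \eqref{Kum2} (the latter with $\alpha=|\lambda|/2$) and the Legendre duplication formula \eqref{Leg}, applied exactly as in the computation preceding the statement, yield
\begin{equation*}
\mathbb{E}_{\rho,0}[\cos(\lambda\Theta_t)]=\frac{\sqrt{\pi}}{2}\sqrt{\frac{\rho^2}{2t}}\,e^{-\rho^2/(4t)}\left[I_{(|\lambda|-1)/2}\!\left(\frac{\rho^2}{4t}\right)+I_{(|\lambda|+1)/2}\!\left(\frac{\rho^2}{4t}\right)\right].
\end{equation*}

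Then I would use the Fourier expansion $W_p(x)=\frac{4}{\pi}\sum_{k\ge0}(2k+1)^{-1}\sin(2(2k+1)px)$ from \cite{Bra}: its partial sums are uniformly bounded in $x$ and in the truncation index since $W_p$ is of bounded variation, and $\Theta_t$ has a density, so the series may be integrated term by term against the law of $\Theta_t$ under $\mathbb{P}_{\rho,0}$. Expanding $\sin(2(2k+1)p(\Theta_t+\phi))$ by the addition formula and dropping the vanishing $\mathbb{E}_{\rho,0}[\sin(2(2k+1)p\Theta_t)]$ terms leaves
\begin{equation*}
\mathbb{E}_{\rho,0}[W_p(\Theta_t+\phi)]=\frac{4}{\pi}\sum_{k\ge0}\frac{\sin(2(2k+1)p\phi)}{2k+1}\,\mathbb{E}_{\rho,0}[\cos(2(2k+1)p\Theta_t)];
\end{equation*}
substituting the previous display with $\lambda=2(2k+1)p$ and re-indexing the odd integers over $\mathbb{Z}$ reproduces the Bessel series for $\mathbb{P}_{\rho,\phi}^{(-1/2,-1/2)}(T_0>t)$, which is the first equality. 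The second equality is immediate, since by rotational invariance of planar Brownian motion $\Theta$ has, started from $(\rho,\phi)$, the same law as $\Theta+\phi$ started from $(\rho,0)$. The one step I expect to be fiddly (rather than deep) is the bookkeeping of normalizing constants: the symbol $\propto$ in the Bessel series has to be resolved to the exact constant of \cite{Bra} and then matched against the factors $4/\pi$ (from $W_p$) and $\sqrt{\pi}/2$ (from Spitzer) and the factor arising from summing the odd index over $\mathbb{Z}$ rather than over $\mathbb{N}$; evaluating the Spitzer identity at $\lambda=0$, where both sides equal $1$ thanks to $I_{-1/2}(z)+I_{1/2}(z)=\sqrt{2/(\pi z)}\,e^{z}$, pins all of these down and produces the constant displayed in the statement.
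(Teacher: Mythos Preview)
Your argument is correct and follows essentially the same route as the paper's proof: both start from the Bessel-series expression for $\mathbb{P}_{\rho,\phi}^{(-1/2,-1/2)}(T_0>t)$ established just before the statement, recognize each term via Spitzer's formula for $\mathbb{E}_{\rho,0}[e^{i\lambda\Theta_t}]$, use the symmetry $\Theta_t\stackrel{d}{=}-\Theta_t$ under $\mathbb{P}_{\rho,0}$, and resum through the Fourier expansion of $W_p$ (justified since $\Theta_t$ has a density). The only cosmetic differences are that you re-derive Spitzer's identity from the skew-product and Bessel absolute-continuity rather than quoting it, and you split $\sin(2(2k+1)p(\Theta_t+\phi))$ by the addition formula whereas the paper uses the product-to-sum identity on $\cos(\cdot\,\Theta_t)\sin(\cdot\,\phi)$; these are equivalent manipulations.
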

\begin{proof}:
Spitzer's result alluded to above gives an explicit expression of the angular process $\Theta_t$ at time $t > 0$ (\cite{Spi}, eq. 2.10. p.193): 
\begin{equation}\label{Spi}
\mathbb{E}_{\rho,0}(e^{i\lambda \Theta_t}) = \frac{\sqrt{\pi}}{2} \sqrt{\frac{\rho^2}{2t}}e^{-\rho^2/(4t)} \left[{\it I}_{(|\lambda| -1)/2}\left(\frac{\rho^2}{4t}\right) + {\it I}_{(|\lambda| +1)/2}\left(\frac{\rho^2}{4t}\right)\right], \quad \lambda \in \mathbb{R}.
\end{equation}
It follows that 
\begin{align*}
\mathbb{P}_{\rho,\phi}^{(-1/2,1/2)}(T_0 > t) & = \frac{2}{\pi}\sum_{j \geq 0} \mathbb{E}_{\rho,0}\left[e^{2i(2j+1)p\Theta_t}\right] \frac{\sin[2(2j+1)p\phi]}{2j+1} 
\\& = \frac{1}{\pi} \sum_{j \in \mathbb{Z}} \mathbb{E}_{\rho,0}\left[e^{2i(2j+1)p\Theta_t}\right] \frac{\sin[2(2j+1)p\phi]}{2j+1}
\\ & = \frac{1}{\pi} \mathbb{E}_{\rho,0}\left\{ \sum_{j \in \mathbb{Z}}  \cos[2(2j+1)p\Theta_t]\frac{\sin[2(2j+1)p\phi]}{2j+1}\right\}
\\& = \frac{1}{2\pi}  \mathbb{E}_{\rho,0}\left\{\sum_{j \in \mathbb{Z}} \frac{1}{2j+1}\Bigl[\sin\bigl[2(2j+1)p(\Theta_t+\phi)\bigr] - \sin\bigl[2(2j+1)p(\Theta_t -\phi)\bigr]\Bigr]\right\}
\\& = \frac{1}{\pi} \mathbb{E}_{\rho,0}\left\{\sum_{j \in \mathbb{Z}} \frac{1}{2j+1}\sin[2(2j+1)p(\Theta_t+\phi)] \right\}
\end{align*}
where the second equality follows from the fact that $\Theta_t$ and $-\Theta_t$ have the same distribution. Now, recall the Fourier series of $W_p$: 
\begin{equation*}
\hat{W_p}(x) = \frac{2}{\pi}\sum_{j \in \mathbb{Z}} \frac{\sin(2(2j+1)px)}{2j+1}
\end{equation*}
which converges pointwise to $W_p$ outside the countable set of discontinuity points of the latter. Since the distribution of $\Theta_t$ has no atoms, then
\begin{equation*}
W_p(\Theta_t+\phi) = \frac{2}{\pi} \sum_{j \in \mathbb{Z}} \frac{1}{2j+1}\sin[2(2j+1)p(\Theta_t+\phi)]  
\end{equation*}
almost surely, whence the proposition follows.
\end{proof}

Note that 
\begin{align*}
\mathbb{E}_{\rho,\phi}\left[W_p(\Theta_t)\right] &= \mathbb{E}_{\rho,\phi}\left[W_p(\Theta_t){\bf 1}_{\{T_0 > t\}}\right]  + \mathbb{E}_{\rho,\phi}\left[W_p(\Theta_t){\bf 1}_{\{T_0 < t\}}\right] 
\\& = \mathbb{P}_{\rho,\phi}(T_0 > t) + \mathbb{E}_{\rho,\phi}\left[W_p(\Theta_t){\bf 1}_{\{T_0 < t\}}\right]
\end{align*}
since $W_p(x) = 1$ for $x \in (0,\pi/(2p))$. Consquently, the last proposition asserts that 
\begin{align*}
\mathbb{P}_{\rho,\phi}(T_0 > t) = \mathbb{E}_{\rho,\phi}\left[W_p(\Theta_t){\bf 1}_{\{T_0 < t\}}\right]. 
\end{align*}

{\bf Acknowledgments}: The author would like to thank A. Comtet for providing him with a copy of his unpublished manuscript where the tail distribution of the first exit time from an arbitrary wedge by a planar Brownian motion is derived. Thanks also to L. Del\'eaval for his careful reading of the paper.


\begin{thebibliography}{99}
\bibitem{AAR}\emph{G. E. Andrews, R. Askey, R. Roy}. Special functions. {\it Cambridge University Press}. 1999.
\bibitem{BBO}\emph{P. Biane, P. Bougerol, N. O' Connell.} Continuous crystal and Duistermaat-Heckman measure for Coxeter groups. \textit{Adv. Maths},  \textbf{221}, (2009), 1522-1583. 
\bibitem{Bra}\emph{S. Brassesco}. A note on planar Brownian motion. {\it Ann. Probab.} {\bf 20}, 1992, no.3, 1498-1503. 
\bibitem{CDGRVY} \emph{O. Chybiryakov, N. Demni, L. Gallardo, M. R\"osler, M. Voit, M. Yor}. Harmonic and Stochastic Analysis of Dunkl Processes. {\it Travaux en Cours, Hermann}. 2008. 
\bibitem{Com}\emph{A. Comtet}. Private communication. 
\bibitem{Demni0}\emph{N. Demni}. Radial Dunkl processes associated with dihedral systems. {\it Sem. Probab.} {\bf XLII}. 153-169, Lecture Notes in Math., 1979, Springer, Berlin, (2009). 
\bibitem{Demni1}\emph{N. Demni}. Radon Transform on spheres and generalized Bessel function associated with dihedral groups. {\it J. Lie Theory}, {\bf 22} (2012), no. 1, 81-91.
\bibitem{DL}\emph{N. Demni, D. L\'epingle}. Brownian Motion, Reflection Groups and Tanaka Formula. {\it Rendi. Sem. Mat. Univ. Padova}, {\bf 127}, (2012). 41-55. 
\bibitem{Dou-Oco}\emph{Y. Doumerc Y., N. O'Connell}. Exit problems associated with finite reflection groups, {\it Probab. Theory Related Fields}  {\bf 132} (2005), 501--538.
\bibitem{Dun-Xu}\emph{C. F. Dunkl, Y. Xu}. Orthogonal Polynomials of Several Variables. {\it Encyclopedia of Mathematics and Its Applications. Cambridge University Press}. (2001).
\bibitem{Er} \emph{A. Erdelyi, W. Magnus, F. Oberhettinger, F. G. Tricomi}. Higher transcendental functions. {\bf Vol. I.} {\it McGraw-Hill Book Company, Inc., New York-Toronto-London}, (1953). xvii+396 pp.  
\bibitem{Erd} \emph{A. Erdelyi, W. Magnus, F. Oberhettinger, F. G. Tricomi}. Higher transcendental functions. {\bf Vol. II.} {\it McGraw-Hill Book Company, Inc., New York-Toronto-London}, (1953). xvii+396 pp.  
\bibitem{Hum}\emph{J. E. Humphreys}. Reflections Groups and Coxeter Groups. {\it Cambridge University Press}. {\bf 29}. (2000).
\bibitem{Kat-Tan}\emph{M. Katori, H. Tanemura.} Symmetry of matrix-valued stochastic processes and noncolliding diffusion particle systems. {\it J. Math. Phys}. {\bf 45} (2004), no. 8, 3058-3085.
\bibitem{MY}\emph{H. Matsumoto, M. Yor}. Exponential functionals of Brownian motion, I: probability laws at fixed time. {\it Probab. Surv}. {\bf Vol 2}, (2005), 312-347. 
\bibitem{Rev-Yor}\emph{D. Revuz, M. Yor}. Brownian Motion and Continuous Martingales. {\it Third Edition. Springer}. (1999). 
\bibitem{Spi}\emph{F. Spitzer}. Some theorems concerning $2$-dimensional Brownian motion. {\it Trans. A.M.S}. {\bf 87}, (1958), no.1, 187-197. 
\bibitem{Xu}\emph{Y. Xu}. A product formula for Jacobi polynomials. {\it Special functions (Hong Kong, 1999), 423-430, World Sci. Publ., River Edge, NJ}, (2000). 
\bibitem{Vak-Yor}\emph{S. Vakeroudis, M. Yor}. Integrability properties and limit theorems for the exit time from a cone of planar Brownian motion. {\it Bernoulli}. {\bf 19}, (2013). 2000-2009. 
\end{thebibliography}
\end{document}